\theoremstyle{plain}
 \newtheorem{thm}{Theorem}[section]
\newtheorem{prop}[thm]{Proposition}
\theoremstyle{definition}
\theoremstyle{plain}
\newtheorem{theorem}[thm]{Theorem}
\theoremstyle{definition}
\newtheorem{defin}[thm]{Definition}
\numberwithin{equation}{section}
\newcommand{\sH}{{\mathcal H}}
\newcommand{\sL}{{\mathcal L}}
\newcommand{\B}{{\mathbb B}}
\newcommand{\C}{{\mathbb C}}
\newcommand{\E}{{\mathbb E}}
\newcommand{\N}{{\mathbb N}}
\newcommand{\R}{{\mathbb R}}
\newcommand{\hol}{\ensuremath{\mathcal{O}}}
\newcommand\s{\sigma}
\newcommand\e{\epsilon}
\DeclareMathOperator{\Pic}{Pic}
\newcommand{\HH}{\ensuremath{\mathbb{H}}}
\newcommand{\FF}{\ensuremath{\mathbb{F}}}
\newcommand{\ra}{\ensuremath{\rightarrow}}
\newcommand{\CC}{\mathbb{C}}
\newcommand{\PP}{\mathbb{P}}
\newcommand{\QQ}{\mathbb{Q}}
\newcommand{\RR}{\mathbb{R}}
\newcommand{\ZZ}{\mathbb{Z}}
\newcommand{\Num}{\mathrm{Num}}
\begin{document}
\title[
Odd fake $\QQ$-homology quadrics]{
Odd fake  $\QQ$-homology quadrics exist}
\author{Fabrizio Catanese}
\date{\today}

\address{Fabrizio Catanese,
 Mathematisches Institut der Universit\"{a}t
Bayreuth, NW II\\ Universit\"{a}tsstr. 30,
95447 Bayreuth, Germany.}
\email{Fabrizio.Catanese@uni-bayreuth.de}

\thanks{AMS Classification:  14J15; 14J25; 14J29; 14J80.\\ 
Key words: Surfaces of general type,  homology quadric, fake quadric, group actions, surfaces isogenous to a product.\\ }

\maketitle

\begin{abstract}
We show the existence of odd fake $\QQ$-homology quadrics, namely of minimal surfaces $S$ of general type
which have the same $\QQ$-homology as a smooth quadric $Q \cong \PP^1(\CC)^2$,
but have an odd intersection form on $ H^2(S, \ZZ)/\mathrm{Tors}(S)$, where $\mathrm{Tors}(S)$ is the Torsion subgroup.

Our examples are provided  by a special  1-dimensional family of  surfaces isogenous to a product of unmixed  type.

\end{abstract}

\section{Introduction}

Minimal smooth complex algebraic surfaces  of general type which have the same $\QQ$-homology algebra as a quadric $Q \cong \PP^1(\CC)^2$
are called fake  $\QQ$-homology quadrics. They are  exactly  the  smooth minimal complex surfaces with  $q=p_g=0$,  with  second Betti number $b_2(S)=2$
(equivalently, with $K^2_S=8$) and with nef canonical divisor.

Their  Severi lattice $Num(S)$, of divisor classes modulo numerical equivalence,  has rank 2 and coincides   with $ H^2(S, \ZZ)/\mathrm{Tors}(S)$;
 by Poincar\'e duality, it has a unimodular intersection form. The intersection  form can be even (then the lattice is called  a hyperbolic plane), or it is odd and then  the intersection form is diagonalizable (see \cite{serre}).

In the paper \cite{ccck}, among other things, many examples were produced of such fake  $\QQ$-homology quadrics with  even intersection form,
but we were not able to show the existence of such surfaces with 
odd intersection form.

This article provides such  a family of  such surfaces, 
thus answering problems 1 and 3 raised in \cite{ccck}.

The  family consists of  surfaces $S$ isogenous to a higher product of unmixed type,
which  means that  
$$S\cong (C_1 \times C_2)/G,$$
where $C_i$ is a smooth curve with $g(C_i)\ge 2$ and $G$ is a finite group acting faithfully on $C_1$ and on $C_2$,
 such that  the diagonal action $ g(x,y) : = (gx, gy)$  of $G$ on $C_1\times C_2$ is free.

In particular, the universal covering of $S$  the bidisk $\HH \times\HH$.

Indeed, up to now, all the known  examples of fake homology quadrics have universal cover equal to the bidisk $\HH \times\HH$,
and one may even conjecture that it must be always so.

The more narrow class of topologically fake quadrics  refers to an old question raised by  Hirzebruch (\cite{hirz} Problem 25,  see also   \cite{ges-werke}, pages 779-780): does there  exist a surface of general type which is homeomorphic to $\PP^1 \times \PP^1$?  \footnote{ One can ask the same question for a surface
  homeomorphic to the blow up $\FF_1$ of $\PP^2$ in one point: in this case the intersection form is odd.} 

Here  is the precise form of our main result.

\begin{theorem}\label{main-theorem}
Consider the  family of surfaces isogenous to a product of unmixed type with $q=p_g=0$, 
 with group $G := \frak A_5$, and where 

$C_1$ is the $G$-covering of $\PP^1$ branched on  three points with Hurwitz triple (monodromy triple)
$(2,4)(3,5) ,  (2,1,3,4,5),  (1,2,3,4,5), $

$C_2$ is the $G$-covering of $\PP^1$ branched on four points with Hurwitz quadruple (monodromy quadruple)
$(1,2,3), (3,4, 5) ,  (4,3,2),  (2,1,5). $

Then the corresponding family of surfaces  isogenous to a product of unmixed type,
$S = (C_1 \times C_2)/G$,  is a family of fake $\QQ$-homology quadrics of odd type.

\end{theorem}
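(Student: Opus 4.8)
The plan is to verify by the standard numerical route that $S$ is a fake $\QQ$-homology quadric, and then to compute $\Num(S)=H^2(S,\ZZ)/\Tors(S)$ precisely enough to read off the parity of its intersection form. First I would check that the data genuinely define the surface: in each case the ordered product of the listed permutations is $1$ and the tuple generates $\mathfrak A_5$ (Riemann existence), and the diagonal $G$-action on $C_1\times C_2$ is free. The freeness reduces to checking that no nontrivial element of $\mathfrak A_5$ has fixed points on both curves: the stabilizers occurring on $C_1$ are conjugates of powers of the monodromy, hence of order $2$ or $5$, while those on $C_2$ have order $3$, so the two sets meet only in $1$. Riemann–Hurwitz then gives $g(C_1)=4$ and $g(C_2)=21$, whence $q(S)=g(C_1/G)+g(C_2/G)=0$, $\chi(\Oh_S)=(g(C_1)-1)(g(C_2)-1)/|G|=3\cdot20/60=1$, $p_g(S)=0$, $K_S^2=8\chi(\Oh_S)=8$ and $b_2(S)=2$; since $K_{C_1\times C_2}$ is ample and $\pi\colon C_1\times C_2\to S$ is étale, $K_S$ is ample, in particular nef. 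By the characterization recalled in the introduction, $S$ is a fake $\QQ$-homology quadric, and $\Num(S)$ is a unimodular lattice of rank $2$ and signature $(1,1)$.

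Next I would use the two isotrivial pencils $f\colon S\to C_1/G=\PP^1$ and $g\colon S\to C_2/G=\PP^1$ coming from the projections. Write $f_1,f_2$ for the classes of the factor curves in $\Num(C_1\times C_2)$, so $f_1^2=f_2^2=0$ and $f_1\cdot f_2=1$, and recall that $\pi^*$ is injective and multiplies the intersection form by $60$. One then computes $\pi^*\bar F_1=60f_1$, $\pi^*\bar F_2=60f_2$ for the general fibres; $\pi^*D_1=30f_1$, $\pi^*D_2=\pi^*D_3=12f_1$ for the reduced fibres $D_i\cong C_2/\langle\sigma_i\rangle$ of $f$ over its three branch points (multiplicities $2,5,5$); $\pi^*D'_j=20f_2$ for the reduced fibres $D'_j\cong C_1/\langle\tau_j\rangle$ of $g$ over its four branch points (all of multiplicity $3$); and $\pi^*K_S=(2g(C_1)-2)f_1+(2g(C_2)-2)f_2=6f_1+40f_2$. (Each $\sigma_i$, resp.\ $\tau_j$, acts freely on $C_2$, resp.\ $C_1$, by freeness of the diagonal action, so these reduced fibres are smooth and irreducible.) Thus $\pi^*\Num(S)$ contains the sublattice $\Lambda_0:=\langle 6f_1,20f_2\rangle$ spanned by the curve classes, using $\gcd(60,30,12)=6$ and $\gcd(60,20)=20$; this $\Lambda_0$ has determinant $-14400$, and since $\Num(S)$ is unimodular $\pi^*\Num(S)$ has determinant $\pm3600$, so $[\pi^*\Num(S):\Lambda_0]=2$.

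Now $\Lambda_0$ has exactly three index-$2$ overlattices inside $\QQ f_1\oplus\QQ f_2$, namely $\langle 3f_1,20f_2\rangle$, $\langle 6f_1,10f_2\rangle$ and $\langle 6f_1,20f_2,3f_1+10f_2\rangle$. The middle one is impossible: in the corresponding lattice $K_S$ (pulling back to $6f_1+40f_2=y_1+4y_2$ in the basis $y_1,y_2$ with $\pi^*y_1=6f_1$, $\pi^*y_2=10f_2$) would satisfy $K_S\cdot y_2=1\not\equiv0\equiv y_2^2\pmod2$, contradicting Wu's formula $K_S\cdot x\equiv x^2\bmod2$. The first one is excluded because its $f$-vertical part — the orthogonal complement of $\bar F_1$ — is $3\ZZ f_1$, whereas the $f$-vertical part of $\Num(S)$ is spanned by the components of the fibres of $f$, that is by $\bar F_1,D_1,D_2,D_3$, which pull back into $6\ZZ f_1$. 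Hence $\pi^*\Num(S)=\langle 6f_1,20f_2,3f_1+10f_2\rangle$, which is an odd lattice: the class $z$ with $\pi^*z=3f_1+10f_2$ has $z^2=\tfrac1{60}(3f_1+10f_2)^2=1$. Therefore the intersection form on $\Num(S)$ is odd, i.e.\ $S$ is a fake $\QQ$-homology quadric of odd type.

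The step I expect to be the main obstacle is the assertion used to discard $\langle 3f_1,20f_2\rangle$: that the $f$-vertical part of $\Num(S)$ is exactly the span of the fibre components of $f$ — equivalently, that $w:=D_1-2D_2=\tfrac1{10}\bar F_1$ is primitive in $\Num(S)$, equivalently that $K_S$ is not divisible by $2$ in $\Num(S)$ (if it were, $\Num(S)$ would be the even hyperbolic plane). The numerics above are symmetric between the even and the odd answer, so this genuinely forces one to compute $\Num(S)$. I would do this through the $\mathfrak A_5$-module structure of $H^2(C_1\times C_2,\ZZ)$: Künneth gives $H^2(C_1\times C_2,\ZZ)^G=\ZZ f_1\oplus\ZZ f_2$ exactly because $H^1(C_1,\QQ)$ and $H^1(C_2,\QQ)$ share no irreducible $\mathfrak A_5$-summand — a fact that itself must be read off from the monodromy, and is what forces $b_2(S)=2$ — and then the Cartan–Leray spectral sequence identifies the cokernel of $\pi^*\colon H^2(S,\ZZ)\to\ZZ f_1\oplus\ZZ f_2$ with a group-cohomology term of $\mathfrak A_5$, the relevant $\ZZ/2$ being accounted for by the Schur multiplier $H^3(\mathfrak A_5,\ZZ)=\ZZ/2$. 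Equivalently and more concretely one can try to exhibit the missing class directly, namely a $G$-linearized line bundle of bidegree $(3,10)$ on $C_1\times C_2$ — i.e.\ a proof that $\tfrac1{10}\bar F_1+D'_1$ is $2$-divisible in $\Num(S)$ — which amounts to bookkeeping with theta characteristics and torsion in the $G$-equivariant Picard group of each factor. This lattice computation is the heart of the argument.
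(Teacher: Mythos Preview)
Your setup is correct and you have reduced the problem to the right question: after ruling out $\langle 6f_1,10f_2\rangle$ by adjunction, the remaining dichotomy is precisely ``is $K_S$ $2$-divisible in $\Num(S)$?'', i.e.\ is the lattice $\langle 3f_1,20f_2\rangle$ (even) or $\langle 6f_1,20f_2,3f_1+10f_2\rangle$ (odd). But the argument you give to exclude $\langle 3f_1,20f_2\rangle$ is circular, and you say so yourself in the next paragraph. The claim ``the $f$-vertical part of $\Num(S)$ is spanned by the components of the fibres of $f$'' is \emph{not} a general fact about fibrations with irreducible fibres; it is exactly the statement that $\Phi_1=\tfrac{1}{10}\bar F_1$ is primitive, equivalently that $K_S\equiv\Phi_1+2\Phi_2$ is not $2$-divisible. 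So your ``proof'' of oddness assumes oddness. The two escape routes you sketch (a Cartan--Leray computation using $H^3(\mathfrak A_5,\ZZ)=\ZZ/2$, or an explicit $G$-linearized line bundle of bidegree $(3,10)$) are reasonable programmes, but neither is carried out; as written, the proposal is not a proof.

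The paper closes this gap by a completely different mechanism. First it uses the crucial observation (from the table) that $\mathrm{Tors}(S)\cong(\ZZ/3)^2\times\ZZ/15$ has \emph{odd} order; hence $2$-divisibility of $K_S$ in $\Num(S)$ is equivalent to $2$-divisibility in $\Pic(S)$, which in turn is equivalent to the existence of a $G$-linearized square root of $p_1^*K_{C_1}$ on $C_1\times C_2$, and finally to the existence of a $G$-linearized theta characteristic on the single curve $C_1$. The whole problem thus lives on $C_1$ alone. The paper then proves that no $\mathfrak A_5$-linearized theta characteristic exists on $C_1$ by identifying $C_1$ explicitly as the genus-$4$ curve $\{\sum x_i=\sum x_i^2=\sum x_i^3=0\}\subset\PP^4$, analysing its canonical quadric $Q\cong\PP^1\times\PP^1$, and showing via the character tables of $\mathfrak A_5$ and $SL(2,5)$ that the two rulings $L_1,L_2$ of $Q$ are $G$-invariant but \emph{not} $G$-linearizable (the centre of $SL(2,5)$ acts nontrivially on the two $2$-dimensional irreducibles $H^0(L_j)$). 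A short argument with $H^0(\theta\otimes L_j)$ then shows that a $G$-linearization of $\theta$ would force a $G$-linearization of some $L_j$, a contradiction.

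So the missing idea in your approach is twofold: the odd-torsion trick that lets you pass from $\Num$ to $\Pic$ and hence to a question on $C_1$ alone, and the concrete representation-theoretic obstruction on $C_1$ coming from the Schur cover $SL(2,5)\to\mathfrak A_5$ acting on the rulings of the canonical quadric. Your instinct that the Schur multiplier $\ZZ/2$ is the source of the phenomenon is correct, but the paper exploits it geometrically on the genus-$4$ curve rather than through a spectral sequence on $S$.
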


The proof hinges on the symmetries of the curve $C_1$, the unique curve of genus $4$ with $\frak A_5$-symmetry.

An interesting question would be to determine exactly which other of the known fake $\QQ$-homology quadrics
are of odd type (see the examples of \cite{product},  \cite{ccck}, \cite{frapporti}, \cite{dzambic}, \cite{d-r}).

   {\bf Acknowledgement.} 
   
   Heartfelt  thanks to Matthias Sch\"utt for making the crucial observation that the surfaces in the first family of  the classification
   list  of surfaces isogenous to a product with $q=p_g=0$ have Torsion group of odd order. 
   
   The question about the  existence of fake quadrics of odd type was raised by Jianqiang Yang in an email correspondence 
   (in December 2022). 
 
 Thanks to Kyoung-Seog Lee for asking a question concerning the proof, which lead me to 
 add  a few lines  of explanation before Proposition 3.1.

\section{Definitions and basic properties}

We shall  work with projective smooth surfaces defined over the field $\CC$.

\begin{defin}[ $\QQ$-homology Quadric]\label{defn:rationalquadric}
Let $S$ be a smooth projective surface over $\CC$.

The surface $S$ is called a   {\bf $\QQ$-homology quadric} if $H^*(S, \QQ)$ has Betti numbers $b_1(S)=b_3(S)= 0$, 
 $b_2(S)=2$ (equivalently, $q(S)=p_g(S)=0$).

In turn, it will be called an {\bf even homology  quadric} if moreover
 the intersection form on $\mathrm{Num}(S)$ is even, that is, with matrix
    \begin{align*}
        \left(\begin{array}{cc}
               0 & 1\\
               1 & 0
              \end{array}\right).
    \end{align*}

It will be called an {\bf odd homology quadric } if instead the intersection form is odd (hence diagonalizable with diagonal entries $ (+1, -1)$).
\end{defin}
For a $\QQ$-homology quadric $S$, by  the Noether formula we must have
$$\chi(\hol_S)=1, c_2(S)=4, K_S^2=8.$$
Also the long exact sequence of cohomology groups associated to the exponential exact sequence shows that, if $b_1(S)=0$,
 $$c_1 \colon \mathrm{Pic}(S) \rightarrow H^2(S,\ZZ)$$
 is an isomorphism. We identify these two groups and denote by
 $\mathrm{Tors}(S)$ their torsion subgroup. Then it follows that 
 $$\mathrm{Num}(S)\cong \mathrm{Pic}(S)/\mathrm{Tors}(S) \cong H^2(S,\ZZ)/\mathrm{Tors}(S).$$

An even homology   quadric  $S$ must be minimal, since it contains no $(-1)$-curves. Hence, by surface classification, for an even homology quadric
\begin{itemize}
    \item either $S$ is rational, and then $S\cong \mathbb{F}_{2n}$ for $n\ge 0$;
    \item or $S$ is a minimal surface of general type.
\end{itemize}
In the former case $S$ is simply connected, $\mathrm{Tors}(S)=0$ and $K_S$ is not ample.

 In the case of an odd homology quadric
\begin{itemize}
    \item either $S$ is rational, and then $S\cong \mathbb{F}_{2n+1}$ for $n\ge 0$;
    \item or $S$ is a, non necessarily minimal, surface of general type.
\end{itemize}

\begin{defin}[Even fake Quadric]\label{defn:fakequadric} Let $S$ be a smooth projective surface over $\CC$.
The surface $S$ will be  called \footnote{The reader should be made aware that, in the literature, sometimes $\QQ$-homology quadrics
of general type  are referred to as fake quadrics, without further specification whether the intersection form is even or odd, see \cite{ges-werke}, \cite{dzambic}, \cite{d-r}, \cite{f-l}.} an {\bf even fake quadric} if it is an even  homology quadric and is of general type.
\end{defin}

\begin{defin}[Odd fake Quadric]\label{defn:fakeF1}Let $S$ be a smooth projective surface over $\CC$.
  If $S$  is an odd homology quadric which is  of general type,
then either $S$ is not minimal and it is a one point blow up of a fake projective plane, or
we call the  surface $S$ an {\bf odd fake quadric}, which means:
\begin{enumerate}
    \item $S$ is minimal of general type with $K_S^2=8, p_g(S)=0$;
    \item the intersection form on $\mathrm{Num}(S)$ is odd.
\end{enumerate}
\end{defin}

Therefore smooth minimal surfaces of general type with $K^2=8$ and $p_g=0$ are divided into two classes:
the even and the odd fake quadrics.

\begin{defin}Let $S$ be a smooth projective surface.
The surface $S$ is said to be {\bf isogenous to a  higher product}  \cite{isogenous}  if
$$S\cong (C_1 \times C_2)/G,$$
where $C_i$ is a smooth curve with $g(C_i)\ge 2$ and $G$ is a finite group acting faithfully and freely on
$C_1\times C_2$.

 If there are respective actions of $G$ on $C_1$ and $C_2$ such that  $G$ acts
 by the diagonal action $ g(x,y) : = (gx, gy)$  on $C_1\times C_2$, then $S$ is called of {\bf unmixed type}.

If some element of $G$ exchanges two factors, then $S$ is called of {\bf mixed type}.
\end{defin}
 Surfaces isogenous to a product of unmixed type with $p_g=0$,
 have been classified
in \cite{product} (their torsion groups have been classified in \cite{bcf}).

We summarize here, for the reader's benefit, their classification, their torsion groups $Tors(S) \cong H_1(S, \ZZ)$, and an extra bit of information, namely the parity
of the intersection form when it is  known: the assertions `even'  were proven in \cite{ccck}, while the assertion `odd' will be the object of our main
theorem.

\begin{theorem}\label{list}
Let $S = (C_1\times C_2)/G$ be a surface isogenous to a product of unmixed type,
with $p_g(S) = 0$; then $G$ is one of the groups in the Table \ref{tab1} and the multiplicities $T_1, T_2$ of the multiple fibres for
 the two natural fibrations $ S \ra C_i/G$ are listed in the table.
 For each case in the list we have an irreducible component of the moduli space of surfaces of general type, whose dimension is denoted by
 $D$. The property of being an even, respectively an odd  homology quadric
   and the first homology group of $S$ are given in the third last  column, respectively in the last column. 
  \footnote{ The group identity $Id(G)$ consists of the group cardinality $|G|$ followed by the Atlas list number.}

\begin{table}[!h]
\begin{tabular}{c|c|c|c|c|c|c}
$G$ & $Id(G)$ & $T_1$ & $T_2$ & $parity$&$D$&  $H_1(S,\mathbb Z)$\\
\hline
$\frak A_5$ & $\langle 60,5\rangle$& $[2,5,5]$& $[3,3,3,3]$&odd&1&$(\mathbb Z_3)^2\times (\mathbb Z_{15})$\\
$\frak A_5$ & $\langle 60,5\rangle$& $[5,5,5]$& $[2,2,2,3]$  &?&1&$(\mathbb Z_{10})^2$\\
$\frak A_5$ & $\langle 60,5\rangle$& $[3,3,5]$& $[2,2,2,2,2]$  &? &2&$(\mathbb Z_2)^3\times \mathbb Z_6$\\
$\frak S_ 4 \times \mathbb Z_2$& $\langle 48,48 \rangle$& $[2,4,6]$& $[2,2,2,2,2,2]$  &?&3&
$(\mathbb Z_2)^4\times \mathbb Z_4$\\
G(32) & $\langle 32,27 \rangle$& $[2,2,4,4]$& $[2,2,2,4]$ &?&2&$(\mathbb Z_2)^2\times \mathbb Z_4\times \mathbb Z_8$\\
$(\mathbb Z_5)^2$ & $\langle 25,2\rangle$& $[5,5,5]$& $[5,5,5]$ &even&0&$(\mathbb Z_5)^3$\\
$\frak S_ 4 $ & $\langle 24,12\rangle$& $[3,4,4]$& $[2,2,2,2,2,2]$ &even&3&$(\mathbb Z_2)^4\times \mathbb Z_8$\\
G(16) & $\langle 16,3\rangle$& $[2,2,4,4]$& $[2,2,4,4]$&even&2&$(\mathbb Z_2)^2\times \mathbb Z_4\times \mathbb Z_8$\\
$D_4\times \mathbb Z_2$ & $\langle 16,11\rangle$& $[2,2,2,4]$& $[2,2,2,2,2,2]$ &?&4&$(\mathbb Z_2)^3\times (\mathbb Z_4)^2$\\
$(\mathbb Z_2)^4 $ & $\langle 16,14\rangle$& $[2,2,2,2,2]$&$[2,2,2,2,2]$  &even &4&$(\mathbb Z_4)^4$\\
$(\mathbb Z_3)^2$ & $\langle 9,2\rangle$& $[3,3,3,3]$& $[3,3,3,3]$&even&2&$(\mathbb Z_3)^5$\\
$(\mathbb Z_2)^3$ & $\langle 8,5\rangle$& $[2,2,2,2,2]$& $[2,2,2,2,2,2]$  &?&5&$(\mathbb Z_2)^4\times (\mathbb Z_4)^2$\\
\end{tabular}	
\caption{ }\label{tab1}
\end{table}
\end{theorem}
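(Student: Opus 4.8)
The plan is to split the statement into three layers: the arithmetic--combinatorial core (the list of groups, signatures and moduli dimensions, with completeness), the homological layer (the groups $H_1(S,\ZZ)$), and the genuinely delicate layer (the parity column), and to organise the argument accordingly.

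First I would fix the numerical constraints. Because the diagonal action is free, $e(S)=e(C_1)e(C_2)/|G|$ and $\chi(\hol_S)=(g_1-1)(g_2-1)/|G|$, while $\chi(\hol_S)\ge 1$ for a surface of general type. Substituting $p_g=0$ into $\chi=1-q+p_g$ then forces $q=0$, $\chi(\hol_S)=1$, and hence
$$ (g_1-1)(g_2-1)=|G|,\qquad g(C_1/G)=g(C_2/G)=0, $$
so each $C_i\to C_i/G=\PP^1$ is a Galois cover of some signature $T_i=(m_{i,1},\dots,m_{i,r_i})$. Writing $\Theta_i:=-2+\sum_j(1-1/m_{i,j})$, Riemann--Hurwitz gives $g_i-1=\tfrac{|G|}{2}\Theta_i$, so the displayed relation becomes the single equation $|G|\,\Theta_1\Theta_2=4$, subject to the \emph{freeness condition} that no nontrivial element of $G$ lies simultaneously in a stabiliser on $C_1$ and on $C_2$ (equivalently $\Sigma_1\cap\Sigma_2=\{1\}$, where $\Sigma_i$ is the union of stabilisers on $C_i$).

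The classification itself follows from finiteness: each $\Theta_i$ is positive and bounded below by $1/42$ (attained by $(2,3,7)$), so $|G|=4/(\Theta_1\Theta_2)\le 4\cdot 42^2$, and since $\tfrac{|G|}{2}\Theta_i$ must be a positive integer only finitely many triples $(|G|,T_1,T_2)$ survive. For each admissible order one enumerates the groups $G$ and the pairs of spherical generating systems of the prescribed signatures, up to $\Aut(G)$ and Hurwitz moves, retaining exactly those pairs satisfying the freeness condition; this is the computer-assisted classification of \cite{product}, which produces precisely the rows of Table~\ref{tab1}, with moduli dimension $D=(r_1-3)+(r_2-3)$, the number of moduli of the branch points on the two copies of $\PP^1$ modulo $\mathrm{PGL}_2$. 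For the last column I would use that, for a free action, $\pi_1(S)$ sits in the extension determined by the surjection of the product of the two orbifold surface groups onto $G$; abelianising this presentation yields $H_1(S,\ZZ)=\Tors(S)$, following \cite{bcf}.

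The crux is the parity column. For each fibration $f_i\colon S\to\PP^1$ the general fibre $F_i$ satisfies $F_i^2=0$, $F_1\cdot F_2=|G|$ (by the projection formula applied to $\pi\colon C_1\times C_2\to S$), and $K_S=\tfrac{2g_1-2}{|G|}F_1+\tfrac{2g_2-2}{|G|}F_2$, whence $K_S^2=8$. Over each branch point the fibre is multiple of multiplicity $m_{i,j}$, so its reduced part $\tfrac1{m_{i,j}}F_i$ lies in $\Num(S)$; since $\sum_j\ZZ\cdot\tfrac1{m_{i,j}}=\tfrac1{L_i}\ZZ$ with $L_i=\mathrm{lcm}(T_i)$, the classes $e_i:=\tfrac1{L_i}F_i$ lie in $\Num(S)$ and span a sublattice with Gram matrix $\bigl(\begin{smallmatrix}0 & |G|/(L_1L_2)\\ |G|/(L_1L_2) & 0\end{smallmatrix}\bigr)$, of index $|G|/(L_1L_2)$ in the rank-$2$ unimodular lattice $\Num(S)$. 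When $L_1L_2=|G|$ the pair $(e_1,e_2)$ is already a basis and $\Num(S)\cong U$ is even, which disposes at once of the rows $(\mathbb{Z}_5)^2$, $(\mathbb{Z}_3)^2$, $\mathfrak S_4$ and $\langle 16,3\rangle$. The hard part is the case $L_1L_2<|G|$, where the fibre classes do \emph{not} generate $\Num(S)$: one must then locate the extra integral classes (non-fibral multisections) and test whether some class has odd self-intersection, equivalently whether the characteristic class $K_S$ fails to be divisible by $2$. This refinement of the N\'eron--Severi lattice is the main obstacle, and it is exactly where the parity is decided. For the remaining even entries I would invoke the computations of \cite{ccck}, while for the first row --- the $\mathfrak A_5$-case with $T_1=[2,5,5]$, $T_2=[3,3,3,3]$, where the index is $2$ --- the odd assertion is precisely Theorem~\ref{main-theorem}, established by using the extra symmetries of the unique genus-$4$ curve $C_1$ with $\mathfrak A_5$-action to produce a class of odd square. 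The entries marked ``$?$'' are index-$>1$ cases for which this lattice-refinement has not yet been carried out.
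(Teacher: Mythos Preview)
Your proposal is correct and mirrors exactly how the paper treats this statement: Theorem~\ref{list} is presented as a compilation, with the classification and moduli dimensions taken from \cite{product}, the groups $H_1(S,\ZZ)$ from \cite{bcf}, the ``even'' entries from \cite{ccck}, and the single ``odd'' entry deferred to Theorem~\ref{main-theorem}. Your lattice argument via $e_i=F_i/L_i$ and the index $|G|/(L_1L_2)$ is precisely the content of Lemma~2.4(3) of \cite{ccck} that the paper invokes, and your check that $L_1L_2=|G|$ in the four easy even rows is the same observation.

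One small inaccuracy worth flagging: you describe the proof of oddness in the first row as ``using the extra symmetries \ldots\ to produce a class of odd square''. That is not quite the route taken. The paper instead shows that $K_S\equiv \Phi_1+2\Phi_2$ in $\Num(S)$ and then proves that $\Phi_1$ is not $2$-divisible, by reducing (via the odd torsion and the pullback to $C_1\times C_2$) to the non-existence of a $G$-linearised theta characteristic on the genus-$4$ curve $C_1$; the representation theory of $\mathfrak A_5$ and $\SL(2,5)$ then rules this out. So the ``class of odd square'' is never exhibited directly --- one only shows that the characteristic element $K_S$ is primitive modulo $2$, which is equivalent. Since you defer the details to Theorem~\ref{main-theorem} anyway, this does not affect the validity of your outline.
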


\section{Proof of the main Theorem \ref{main-theorem}}

\begin{proof}

In view of the cited results in \cite{product} and \cite{bcf}, it suffices to prove the assertion about the intersection form
being odd.

As observed in Lemma 2.6 of \cite{ccck}, since for each divisor $D$ $$D^2 + D \cdot K_S = 2 p(D) -2,$$
and $H^2(S, \ZZ)/\mathrm{Tors}(S) = Num (S)$, the intersection form is even if and only if $D \cdot K_S$ is even, for all $D$.
By unimodularity of the intersection form, the intersection form is even if and only if the class of $K_S$
in $Num(S)$ is divisible by 2.

In this special case,  since the Torsion subgroup  (see the table in Theorem \ref{list}) is of odd order, then the 
the class of $K_S$  is divisible by 2 if and only if $K_S$ is divisible by 2 (actually the same  holds for each divisor class).

Using the ramification formula for $ S \ra (C_1/G) \times (C_2/G) = \PP^1  \times  \PP^1 $,
we see that, setting $m_1, \dots, m_s$ to be the multiplicities of the multiple fibres in the first fibration,
and $ n_1, \dots, n_{s'}$ those in the second, then
$$ K_S = (-2 + \sum_j (1 - \frac{1}{m_j} )) F_1 +  (-2 + \sum_i (1 - \frac{1}{n_i} ) ) F_2 .$$

We  use now (3) of Lemma 2.4 of \cite{ccck} showing that in general $F_j = d_j \Phi_j$, where $d_j$ is the least common multiple of the multiplicities of the fibration $S \ra C_j /G$, whose  general fibre is denoted $F_j$.

Hence, in $Num(S) $, we have, for our special family, 
$$ K_S  \equiv  \Phi_1 + 2 \Phi_2 .$$

Then $[K_S] \in Num(S)$ is not 2-divisible if and only if  $\Phi_1$ is not 2-divisible.

In turn, this is equivalent to the fact that the canonical divisor $K_{C_1}$, which is the pull back of
$\Phi_1$,  is not linearly equivalent to  $2 \theta$, where $\theta$ is a  $G$-linearized line bundle on $C_1$.

In fact,  $\Phi_1$ is 2-divisible in $Pic(S)$ if and only if  on $C_1 \times C_2$ there is a 
$G$-linearized square root $\sL$
of the line bundle  $p_1^* (K_{C_1})$. 

 $\sL$ would be of the form  $p_1^* (\theta)  + p_2^* (M)$,
where $M$ is a 2-torsion line bundle on $C_2$. Since $M$ is a torsion line bundle, it is a pull back
form $Jac(C_2)$, and as such it admits a $G$-linearization for the induced action of $G$ on $Jac(C_2)$.
Hence the existence of a $G$-linearization on $\sL$ is equivalent to the existence
of a  $G$-linearization on $\theta$.

The proof is concluded via   showing the following Proposition.

\end{proof}

\begin{prop}
The curve $C_1$ does not admit any $G := \frak A_5$-linearized Thetacharacteristic (that is, a divisor $\theta$
such $ 2 \theta \equiv K_{C_1}$).

$C_1$ is the unique genus 4 curve with automorphism group containing $\frak A_5$, canonically embedded as:
$$ C_1 = \{ (x_1, \dots, x_5) \in \PP^4 | \sum_i x_i=0 , \sum_i x_i^2=0, \sum_i x_i^3=0  \} .$$

\end{prop}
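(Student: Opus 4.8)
The plan is to exploit the explicit canonical model $C_1 = \{x \in \PP^4 \mid \sum x_i = \sum x_i^2 = \sum x_i^3 = 0\}$ and the fact that on this curve the symmetric group $\gS_5$ (not just $\frak A_5$) acts by permuting the coordinates $x_1,\dots,x_5$. The key observation is that a theta-characteristic $\theta$ on a genus $4$ curve $C_1$ satisfies $h^0(\theta) \le 2$, and the even theta-characteristics with $h^0(\theta)$ even are geometrically rigid: the canonical curve $C_1 \subset \PP^3$ lies on a unique quadric $Q$, and the two rulings of $Q$ (when $Q$ is smooth) cut out the two $g^1_3$'s on $C_1$, which are exactly the two vanishing odd theta-characteristics, while the $16$ even theta-characteristics are the $2$-torsion translates. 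First I would record that $\Aut(C_1) \supseteq \gS_5$ acts on the finite set $\Theta(C_1)$ of $16$ theta-characteristics, and that a $G$-linearized theta-characteristic in the sense of the Proposition is in particular a $G$-\emph{fixed} point of this action (linearizability is a cohomological refinement, but fixedness is necessary).

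Next I would analyze the action of $\frak A_5$ on $\Theta(C_1)$, or rather first on $\Jac(C_1)[2] \cong (\ZZ/2)^8$, since $\Theta(C_1)$ is a torsor under $\Jac(C_1)[2]$. The Weil pairing makes $\Jac(C_1)[2]$a symplectic $\F_2$-space of dimension $8$, and $\gS_5$ acts symplectically. The cleanest route is representation theory over $\F_2$: decompose $\Jac(C_1)[2]$ as an $\F_2[\frak A_5]$-module. Since $C_1 \to \PP^1$ is the $\frak A_5$-cover branched over three points, one can compute $H^1(C_1,\F_2)$ as an $\frak A_5$-module via the branch data $(2,5,5)$ — equivalently, $H^1(C_1,\ZZ)\otimes \F_2$ is governed by the Chevalley–Weil formula. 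Then I would determine the affine action of $\frak A_5$ on the torsor $\Theta(C_1)$: a $G$-fixed theta-characteristic exists if and only if the relevant class in $H^1(G, \Jac(C_1)[2])$ vanishes. The two odd (vanishing) theta-characteristics $\theta_1,\theta_2$ are swapped by the full ruling-exchanging symmetry of the quadric $Q$; I would check whether $\frak A_5$ — acting through its image in $\Aut(Q)$, which is $\mathrm{PGL}_2\times\mathrm{PGL}_2$ possibly extended by the swap — fixes each ruling or exchanges them. If $\frak A_5$ fixes each ruling, then $\theta_1$ and $\theta_2$ are each $\frak A_5$-fixed as divisor classes, and one must then check the finer linearization obstruction. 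If instead some element of $\frak A_5$ swaps the rulings, the odd theta-characteristics are immediately excluded, and one is reduced to the even ones.

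For the even theta-characteristics: I expect the main obstacle to be showing that \emph{none} of the $16$ even theta-characteristics is $\frak A_5$-linearized. Here I would argue that an $\frak A_5$-fixed even theta-characteristic $\theta$ with $h^0(\theta)=0$ corresponds to an $\frak A_5$-invariant point of order dividing $2$ in $\Jac(C_1)$ relative to a chosen base odd theta-characteristic; the set of such invariant points is $(\Jac(C_1)[2])^{\frak A_5}$, whose dimension I will have computed in the previous step. If this fixed subspace is $0$ (or if every nonzero fixed $2$-torsion point, when translating $\theta_1$, lands on the $\frak A_5$-orbit of the other odd one), then combined with the ruling analysis there is no $\frak A_5$-fixed theta-characteristic at all, a fortiori none that is linearized, and the Proposition follows. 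The uniqueness statement — that $C_1$ is the unique genus $4$ curve whose automorphism group contains $\frak A_5$ — I would cite or recover from the classification of automorphism groups of genus $4$ curves (the canonical curve must lie on an $\frak A_5$-invariant quadric and cubic in $\PP^3$; the invariant theory of $\gS_5$ on $\PP^4 \cap \{\sum x_i = 0\}$ forces exactly the power-sum equations $p_2 = p_3 = 0$), so this part is essentially bookkeeping in classical invariant theory rather than the crux.
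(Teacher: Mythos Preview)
Your proposal contains a factual error that undermines the whole plan: on a non-hyperelliptic genus~$4$ curve whose canonical model lies on a \emph{smooth} quadric $Q\cong\PP^1\times\PP^1$, the two rulings give line bundles $L_1,L_2$ of degree $3$ with $L_1\otimes L_2\cong K_{C_1}$, \emph{not} $L_j^{\otimes 2}\cong K_{C_1}$. So the two $g^1_3$'s are not theta-characteristics at all (they would be only if $Q$ were a cone, which here it is not), and your ``two vanishing odd theta-characteristics'' do not exist. Relatedly, for $g=4$ there are $2^{2g}=256$ theta-characteristics, $136$ even and $120$ odd, not $16$; every even one on this curve has $h^0=0$.

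More seriously, your strategy hopes to win by showing that no theta-characteristic is $\frak A_5$-\emph{fixed}. For the odd ones this does work (a fixed odd $\theta$ would have a $G$-invariant effective divisor of degree $3$, impossible since the smallest $\frak A_5$-orbit on $C_1$ has $12$ points). But for the $136$ even ones with $h^0=0$ you have no geometric handle, and computing $(\Jac(C_1)[2])^{\frak A_5}$ via Chevalley--Weil and then the torsor class in $H^1(\frak A_5,\Jac(C_1)[2])$ is not carried out. In fact the paper explicitly leaves open, in its Final Remark, whether a $G$-\emph{invariant} theta-characteristic exists; so the route ``no fixed $\theta$, hence a fortiori no linearized $\theta$'' is at best much harder than what is needed, and possibly false.

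The paper's argument goes through the linearization obstruction directly, and the ruling bundles $L_j$ are the key --- but as obstructions, not as candidate theta-characteristics. One shows that $H^0(L_j)$ is a $2$-dimensional projective representation of $\frak A_5$ on which the Schur cover $\SL(2,5)$ acts with nontrivial center; hence the $G$-invariant classes $L_j$ are \emph{not} $G$-linearizable. Now take any $G$-invariant $\theta$ and consider $\theta\otimes L_j$: it has degree $6$, is not $K$, so $h^0(\theta\otimes L_j)=3$. A short character computation shows that for at least one $j$ this $3$-dimensional projective representation is an honest irreducible linear $\frak A_5$-representation, forcing $\theta\otimes L_j$ to be $G$-linearizable. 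If $\theta$ were also linearizable, so would $L_j$ be --- contradiction. Your invariant-theory sketch for the uniqueness of $C_1$ is fine and matches Step~IV of the paper.
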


\begin{proof}
For simplicity of notation, denote here $C_1$ by $C$,  $K : = K_{C_1}$, $\frak A_5$ by $G$.

$C$ is a curve of genus $g=4$, since the canonical divisor has degree $ 6 = \frac{60}{10}$.

Observe that any $G$- invariant holomorphic 1-form descends to $ C/ G = \PP^1$, hence $ H^0(K)^G = 0$.

STEP I) Since  the irreducible nontrivial  linear representations of $G$ have respective dimensions $3,3,4,5$ (see \cite{liebeck}, page 221),
it follows that $ H^0(K)$ is the unique irreducible representation of dimension 4, the permutation representation 
$$V :=  \{ (x_1, \dots, x_5) \in \C^5 | \sum_i x_i=0\}.$$

Let $\theta$ be a Thetacharacteristic on $C$. Then  $\theta$ has degree $3$ and $h^0(\theta) = h^1(\theta) \leq 2$.

ASSUMPTION : the divisor class of $\theta$ is $G$-invariant, hence we have (see \cite{mumford})
the Theta-group $$ 1 \ra \CC^* \ra Theta(\theta) \ra G \ra 1,$$
the group of automorphisms of the line bundle $\theta$ which are lifts of automorphisms of $G$.
The above exact sequence  splits if and only if $\theta$ admits a $G$-linearization, in which case $H^0(\theta)$
is a linear $G$-representation.

STEP II) $C$ is not hyperelliptic, since otherwise, if $\sH$ is the hyperelliptic line bundle, $S^2 (H^0 (\sH)) \subset H^0(K)$
would be a $G$- invariant subspace of dimension 3,  contradicting  the  irreducibility of $H^0(K)$.

STEP III) If $\theta$ admits a $G$-linearization, then $ dim (H^0(\theta))=0$, since  otherwise again 
$S^2 (H^0 (\theta)) \subset H^0(K)$
would be a $G$- invariant subspace of dimension $\leq 3$.

STEP IV) If $C$ has genus 4 and $\frak A_5$-symmetry, then 
$$ C = \{ (x_1, \dots, x_5) \in \PP^4 | \sum_i x_i=0 , \sum_i x_i^2=0, \sum_i x_i^3=0  \} ,$$
and $ Aut(C) = \frak S_5$.

{\em Proof of Step IV)}.

The Hurwitz formula easily shows that $C / \frak A_5 \cong \PP^1$, since  the maximal order of an element of $G$ is 5.

Since $C$ is non hyperelliptic, $C$ is the complete intersection of a quadric $Q$ and of a cubic $M$
inside $\PP^3 = \{ (x_1, \dots, x_5) \in \PP^4 | \sum_i x_i=0 \}. $

Since $Q$ is unique, up to scalars, and also $M$ is unique modulo $Q$, while $G$ has no other irreducible
representations of dimension 1 than the trivial one, $Q$ and $M$ must be symmetric polynomials.

Indeed, letting  
$\chi_2 : =  \chi \circ(g \mapsto g^2)$, $S^2(H^0(K)) = S^2(V)$ decomposes, having character $\frac{1}{2} (\chi^2 + \chi_2)$
which takes values $ 
(10,1,2,0,0)$ on the conjugacy classes, as the sum $\CC \oplus V \oplus U$, where $U$ is the irreducible 5-dimensional representation.

This reproves that there is only one invariant quadric, which may be chosen to be  $\sum_i x_i^2=0$,
by the Newton formulae relating Newton and elementary symmetric polynomials. The same argument shows that
$M$ can be chosen as $\sum_i x_i^3=0$. In particular $C$ admits an $\frak S_5$-symmetry, hence 
$ Aut (C) = \frak S_5$, because of the Hurwitz-Klein  bound for $g=4$, $Aut(C) \leq 168$.

\qed

STEP V) Geometry of the quadric containing the canonical curve and projective representations of $G$.

Here $ C \subset Q = \PP^1 \times  \PP^1 $, and the last product decomposition amounts to seeing 
$K = L_1 \otimes L_2$, where the two line bundles $L_j$ are corresponding to the two coordinate projections
of $\PP^1 \times  \PP^1 $, that  is, $H^0(L_1)$, respectively $H^0(L_2)$ yield the two morphisms $C \ra \PP^1$.

Since $G$ has no subgroup of index 2, then each line bundle class $L_j$ is $G$-invariant,
and $H^0(L_j)$ is a projective representation of $G$. 

Since the group of Schur multipliers $H^2 (G, \CC ^* )= H^3 (G, \ZZ )= H_2 (G, \ZZ )= \ZZ /2$,
there is a unique central extension of $G$, equal to 
$$ 1 \ra \ZZ /2 \ra SL(2,5) \ra \PP SL(2,5) = \frak A _5 \ra 1,$$
and the projective representations of $G$ correspond to the linear representations of $SL(2,5)$.

The irreducible nontrivial  linear representations of $SL(2,5)$ have respective dimensions $2,2,3,3,4,4, 5,6$
(see \cite{dornhoff}, Section 38)
and  the centre acts non trivially on the two irreducible representations of dimension 2.

This is why  $H^0(L_1)\otimes H^0(L_2) =V$ is indeed  representation of $G$, and, since it is irreducible,
$H^0(L_1), H^0(L_2)$ are non isomorphic representations of $SL(2,5)$.

The above  discussion shows in particular  that {\bf the $G$-invariant divisor classes $L_1, L_2$ do not admit a linearization}.

STEP VI) Let $\theta$ be a $G$-invariant Thetacharacteristic: then $ \theta\otimes L_j$ is also $G$-invariant, for $j=1,2$.

Now, $ \theta\otimes L_j$ has degree $6$ but is not isomorphic to $K$, hence $H^0 ( \theta\otimes L_j)$
has dimension equal to 3.

STEP VII) $H^0 ( \theta\otimes L_j)$ is a 3-dimensional projective representation of $G$,
hence a 3-dimensional linear  representation of $SL(2,5)$; but, by the character table, the two irreducible linear representations
of $G$ of dimension 3  lift to the two irreducible linear representations
of $SL(2,5)$ of dimension 3. 

Hence either $H^0 ( \theta\otimes L_j)$ is a 3-dimensional irreducible linear  representation of $G$, or it contains a nonzero 
invariant section $\s_j$.

STEP VIII) There cannot be  two such invariant sections $\s_1, \s_2$, because their product would yield 
an invariant quadric $ q \in H^0(2K)$. We have in fact  shown that there is only one invariant quadric, and this is $Q$:
we reach a contradiction since $ Q |C \equiv 0,$  while $q$ vanishes only on a finite set of points of $C$.

STEP IX) Hence there is a  $j$ such that $H^0 ( \theta\otimes L_j)$ is a 3-dimensional irreducible linear  representation of $G$,
and then we claim that $\theta\otimes L_j$ admits a $G$-linearization.

In fact, otherwise the associated Thetagroup equals $SL(2,5)$. $SL(2,5)$ acts on the total space of the line bundle 
$\theta\otimes L_j$, ad it suffices to show that the centre $\ZZ/2$ acts trivially.

We know that $\ZZ/2$ acts trivially on $H^0 ( \theta\otimes L_j)$, hence also on the image of
$H^0 ( \theta\otimes L_j) \otimes \hol_C \ra \theta\otimes L_j$. As this image is Zariski dense, it follows
that $\ZZ/2$ acts trivially on $\theta\otimes L_j$.

STEP X) Take $j$ such that $\theta\otimes L_j$ admits a $G$-linearization. Then, if $\theta$ admits a linearization,
then also $L_j$ admits a linearization,
 contradicting Step V.

\end{proof}

\subsection{Final remark}There remains the question whether there is a $G$- invariant Thetacharacteristic $\theta$ inside $Pic^3(C)$.
We have seen that it must be $H^0(\theta) = 0$, and that $\theta \otimes L_j$ are $G$ - invariant, for $j=1,2$.

Then $H^0(\theta \otimes L_j)$ is either an irreducible $G$- representation of $G$, or it splits as an irreducible 
2-dimensional representation of $SL(2,5)$ plus a trivial 1-dimensional summand. 

Since $S^2(H^0(K)) = \CC \oplus V \oplus U$, the second alternative is not possible.

If the two representations were the same, their tensor product would be contained in $S^2(H^0(K))$ and would contain the second wedge product, which has dimension 3, a contradiction.

A computation with characters shows that the tensor product of the two distinct  irreducible 3-dimensional representations
is indeed isomorphic to  $ V \oplus U$ which is contained in $S^2(H^0(K))$, hence we do not get any contradiction in this way.

\end{document}